\numberwithin{equation}{section}
\newcommand{\R}{\mathbb{R}}
\newcommand{\N}{\mathbb{N}}
\newcommand{\IL}{\mathbb{L}}
\newcommand{\IT}{\mathbb{T}}
\newcommand{\IM}{\mathbb{M}}
\newcommand{\ga}{\alpha}
\renewcommand{\gg}{\gamma}
\newcommand{\go}{\omega}
\newcommand{\gO}{\Omega}
\newcommand{\cA}{\mathcal{A}}
\newcommand{\cB}{\mathcal{B}}
\newcommand{\cD}{\mathcal{D}} 
\newcommand{\cF}{\mathcal{F}}
\newcommand{\cH}{\mathcal{H}}
\newcommand{\cP}{\mathcal{P}}
\newcommand{\cX}{\mathcal{X}}
\newcommand{\cY}{\mathcal{Y}}
\newcommand{\LHS}{L_{\mathrm{HS}\,}}
\newcommand{\LNP}{L^+_{\mathrm{N}\,}}
\DeclareMathOperator{\E}{\mathbb{E}} 
\DeclareMathOperator{\Cov}{\mathsf{Cov}}
\newcommand{\dd}{{\mathrm d}}
\newtheorem{lemma}{Lemma}[section]
\newtheorem{theorem}[lemma]{Theorem}
\newtheorem{corollary}[lemma]{Corollary}
\theoremstyle{remark}
\theoremstyle{definition}
 \newcommand*{\lrscript}[5]{{\vphantom{#1}}_{#2}^{#3}{#1}_{#4}^{#5}}
\newcommand*{\dualpair}[4]{\ensuremath{\lrscript{\langle}{#1}{}{}{} #3 , #4 \lrscript{\rangle}{}{}{#2}{}}}
\DeclareMathOperator{\Tr}{Tr}
\numberwithin{equation}{section}
\begin{document}
\title[Covariance structure of parabolic SPDEs]{
Covariance structure of parabolic stochastic partial differential equations}

\author[A.~Lang]{Annika Lang} \address[Annika Lang]{\newline 
      Seminar f\"ur Angewandte Mathematik 
\newline ETH Z\"urich \newline R\"amistrasse 101, CH--8092 Z\"urich, Switzerland.
} \email[]{annika.lang@math.ethz.ch}

\author[S.~Larsson]{Stig Larsson} \address[Stig Larsson]{\newline
  Department of Mathematical Sciences
\newline Chalmers University of Technology \newline 
and University of Gothenburg\newline
SE--412 96 Gothenburg, Sweden.} \email[]{stig@chalmers.se}

\author[Ch.~Schwab]{Christoph Schwab} \address[Christoph Schwab]{
\newline 
Seminar f\"ur Angewandte Mathematik 
\newline
ETH Z\"urich \newline R\"amistrasse 101, CH--8092 Z\"urich, Switzerland.} 
\email[]{schwab@math.ethz.ch}

\thanks{Acknowledgement. 
The work of Annika Lang and Christoph Schwab was supported in part by
ERC AdG no.~247277. Stig Larsson was supported by the Swedish Research
Council VR}

\date{\today}
\subjclass{35R60, 60H15, 35K70}

\begin{abstract}
  In this paper parabolic random partial differential equations and
  parabolic stochastic partial differential equations driven by a
  Wiener process are considered. A deterministic, tensorized evolution
  equation for the second moment and the covariance of the 
  solutions of the parabolic stochastic partial differential equations is derived.
  Well-posedness of a space-time weak variational
  formulation of this tensorized equation is established.
\end{abstract}

\maketitle
\section{Introduction}
\label{sec:intro}
With general results on existence and uniqueness of solutions 
of stochastic partial differential equations
being available (see, e.g., \cite{PZ07} and the references there),
the numerical solution of stochastic partial differential equations 
has received increasing attention in recent years.
The most widely used numerical solution approaches
are based on combinations of
time stepping, space discretization, and sample path simulation.
If the parabolic stochastic partial differential equation is linear and driven by $Q$-Wiener
noise, the linearity of the stochastic partial differential equation and of the mathematical
expectation can be exploited to show 
that the expectation of the square integrable 
random solution satisfies the corresponding deterministic, parabolic
evolution equation. 
In the present paper we show that
the covariance operator of the square integrable 
random solution of a parabolic stochastic partial differential equation satisfies
a deterministic, tensorized evolution equation
with a measure-valued, nonseparable forcing term.
We establish the well-posedness of this equation
in tensor products of Bochner spaces via a novel,
tensorized space-time variational formulation of this 
evolution equation.
This variational formulation, while being of interest in its 
own right, can serve as starting point for 
space-time compressive, adaptive Galerkin discretization
techniques as outlined in \cite{ScSt08,ScSt09,CS11}.

The outline of this paper is as follows: In Section~\ref{sec:PDEs}
we introduce the required notation, and recapitulate
basic results which are needed in the sequel. 
We review, in particular, the space-time variational formulation
of linear, parabolic evolution problems from~\cite{SS11}, and  
show how this can be tensorized by taking the tensor product of two such problems.
In Section~\ref{sec:SPDEs} we review the theory of stochastic partial differential equations of It\^o type as stochastic differential equations in Hilbert spaces as presented in~\cite{DPZ92}.
In Section~\ref{sec:4} we first present a tensorized, linear evolution equation for the second moment of the solution of a random evolution partial differential equation. We then state and prove the main result that 
the covariance operator of the mild solution of the stochastic
parabolic partial differential equation driven by $Q$-Wiener noise can be obtained 
as a weak, variational solution of a tensorized, 
deterministic parabolic evolution problem.
\section{Variational formulation of tensorized partial differential equations}
\label{sec:PDEs}
Let us review weak variational formulations of partial differential equations 
and solutions of tensorized equations in this section.
Accordingly, we let $(H, \langle \cdot,\cdot \rangle_H)$ denote 
a separable real Hilbert space and $A: \cD(A) \subset H \rightarrow H$ be a 
linear operator, which we assume 
to be  densely defined, self-adjoint, positive definite, 
and not necessarily bounded but with compact inverse. 
Then there exists an increasing sequence of real numbers 
$(\ga_k, k \in \N)$, which tends to infinity, and an orthonormal basis $(e_k, k \in \N)$ of~$H$ such that 
$A e_k = \ga_k e_k$. The domain of~$A$ is characterized by
  \begin{equation*}
   \cD(A)
    := \Bigl\{ \phi \in H, \,
	  \sum_{k=1}^\infty \ga_k^2 \langle \phi, e_k \rangle_H^2
	    < + \infty
      \Bigr\}.
  \end{equation*}
Furthermore, $-A$~is the generator of an analytic semigroup of contractions~$S = (S(t), t \in \R_+)$ and we are able to define the square root of the operator~$A$. This operator $A^{1/2}: \cD(A^{1/2}) \rightarrow H$ is given by
  \begin{equation*}
   A^{1/2} \phi 
    := \sum_{k=1}^\infty \ga_k^{1/2} \langle \phi, e_k\rangle_H \, e_k,
  \end{equation*}
for all $\phi \in \cD(A^{1/2})$, where
  \begin{equation*}
   \cD(A^{1/2})
      := \Bigl\{ \phi \in  H, \,
	    \|\phi\|_V^2
	    := \sum_{k=1}^\infty \ga_k \langle \phi, e_k \rangle_H^2 < + \infty
	\Bigr\},
  \end{equation*}
and $V := \cD(A^{1/2})$ together with the norm $\|\cdot \|_{V}$ becomes a Hilbert space. The norm satisfies that $\|\phi\|_{V} = \|A^{1/2} \phi\|_H$, for all $\phi \in V$.
Let us define the bilinear form $a:V \times V \rightarrow \R$ by
  \begin{equation*}
   a(\phi, \psi)
    := \langle A^{1/2} \phi, A^{1/2} \psi \rangle_H,
  \end{equation*}
for all $\phi, \psi \in V$. It is symmetric, continuous, coercive, and injective.
In the following, let $V^*$ denote the dual of $V$. By the Riesz representation theorem we identify $H$ with its dual and have the Gelfand triplet $V \subset H \cong H^* \subset V^*$. Then the linear operator $A:\cD(A) \subset H \rightarrow H$ can be interpreted as a bounded linear operator $A: V \rightarrow V^*$, $A \in L(V;V^*)$, via the bilinear form 
\begin{equation*}
 \dualpair{V^*}{V}{A\phi}{\psi} 
  = a(\phi,\psi)
  = \dualpair{V}{V^*}{\phi}{A^*\psi},
\end{equation*}
for $\phi,\psi\in V$, where $\dualpair{V^*}{V}{\cdot}{\cdot}$ denotes the dual pairing between $V$ and~$V^*$. Note that, although we assume that $A=A^*$ is self-adjoint, here and below
we write $A^*$ when the operator appears as an adjoint operator.
 The operator $A \in L(V;V^*)$ is boundedly invertible by the properties of~$a$ and the Lax--Milgram lemma, and its norm is bounded by
\begin{equation*}
 \|A\|_{L(V;V^*)}
  \le 1.
\end{equation*}
Let us fix the time interval $\IT:= [0,T]$, for some $T < + \infty$, and define the Hilbert spaces
  \begin{equation*}
   \cX := L^2(\IT;V)
    \quad \text{and} \quad
   \cY := L^2(\IT;V) \cap H^1_{0,\{T\}}(\IT;V^*),
  \end{equation*}
where
  \begin{equation*}
   H^1_{0,\{T\}}(\IT;V^*)
    := \{\phi \in H^1(\IT;V^*), \, \phi(T) = 0 \}.
  \end{equation*}
Let $\cX^*$ and $\cY^*$ denote the adjoint spaces with respect to the pivot space~$L^2(\IT;H)$, i.e., 
  \begin{equation*}
   \cX^*
    = L^2(\IT;V^*)
    \quad \text{and} \quad
   \cY^*
    = L^2(\IT;V^*) + H^{-1}(\IT;V).
  \end{equation*}
In this framework, let us consider the parabolic partial differential equation
  \begin{equation}\label{eq:PDE}
   (\partial_t + A) u
    = f
  \end{equation}
with initial condition $u(0) = u_0 \in H$, i.e., we want to solve the \emph{weak variational problem} to find, for given $u_0 \in H$ and $f \in \cY^*$, an element $u \in \cX$ such that for all $v \in \cY$
  \begin{equation}\label{eq:weak_var_problem}
   \cB(u,v)
    = \dualpair{\cY^*}{\cY}{f}{v} + \langle u_0, v(0) \rangle_H,
  \end{equation}
where
  \begin{equation*}
   \cB(u,v)
    := \int_0^T (\dualpair{V}{V^*}{u(t)}{-\partial_t v(t)} + a(u(t),v(t))) \, \dd t
    = \int_0^T \dualpair{V}{V^*}{u(t)}{(-\partial_t + A^*)v(t)} \, \dd t.
  \end{equation*}
It is shown in Theorem~2.6 in~\cite{SS11} that the variational problem~\eqref{eq:weak_var_problem} admits a unique solution $u \in \cX$ and that the operator $B:= \partial_t + A: \cX \rightarrow \cY^*$ is an isomorphism.

Let us consider the tensor spaces
  \begin{equation*}
   \cX^{(2)} 
    := \cX \otimes \cX
    \cong L^2(\IT;\R)^{(2)} \tilde{\otimes} V^{(2)}
  \end{equation*}
and
  \begin{equation*}
   \cY^{(2)}
    := \cY \otimes \cY.
  \end{equation*}
Since $(\cX^{(2)})^* = (\cX^*)^{(2)}$, we use the abbreviation $\cX^{(2)*}$ and we similarly define $\cY^{(2)*}$, which can be rewritten as
  \begin{align*}
    \cY^{(2)*}
      & = (L^2(\IT;V^*) + H^{-1}(\IT;V))^{(2)}\\
      & \cong L^2(\IT;\R)^{(2)} \tilde{\otimes} V^{(2)*}
	  + H^{-1}(\IT;\R)^{(2)} \tilde{\otimes} V^{(2)}\\
	  & \qquad + L^2(\IT;V^*) \otimes H^{-1}(\IT;V)
	  +  H^{-1}(\IT;V) \otimes L^2(\IT;V^*).
  \end{align*}
Here, we denote by $\tilde{\otimes}$ the tensor product which separates the spaces with respect to time and space.
Let us define the tensorized bilinear form
  \begin{align*}
   \cB^{(2)}(u,v)
    &:= \int_0^T \int_0^T \dualpair{V^{(2)}}{V^{(2)*}}{u(t,t')}{(-\partial_t + A^*)^{(2)} v(t,t')} \, \dd t \, \dd t'\\
    &= \dualpair{\cX^{(2)}}{\cX^{(2)*}}{u}{(-\partial_t + A^*)^{(2)} v},
  \end{align*}
for all $u \in \cX^{(2)}$ and $v \in \cY^{(2)}$. Then since $B = \partial_t + A$ is an isomorphism so is $B^{(2)}$, and the weak variational problem to find, for given $u_0 \in H^{(2)}$ and $f \in \cY^{(2)*}$, an element $u \in \cX^{(2)}$ such that
  \begin{equation}\label{eq:weakWienerEqn}
   \cB^{(2)}(u,v)
    = \dualpair{\cY^{(2)*}}{\cY^{(2)}}{f}{v} + \langle u_0, v(0,0) \rangle_{H^{(2)}},
  \end{equation}
for all $v \in \cY^{(2)}$, admits a unique solution. The corresponding strong form of the tensorized partial differential equation~\eqref{eq:weakWienerEqn} reads
  \begin{align*}
   (\partial_t + A)^{(2)} u & = f,\\
   (\partial_t + A) \otimes I u(\cdot,0) & = 0,\\
   I \otimes (\partial_t + A) u(0,\cdot) & = 0,\\
   u(0,0) & = u_0.
  \end{align*}

In Section~\ref{sec:4} we will show that second moments and covariances of stochastic and random partial differential equations are solutions of the tensorized partial differential equation~\eqref{eq:weakWienerEqn} with $u_0$ and $f$ chosen appropriately.

\section{Stochastic partial differential equations}\label{sec:SPDEs}

In this section, we consider stochastic partial differential equations and their mild solutions in the framework of~\cite{DPZ92} and~\cite{PZ07}.

Let $(\gO,\cA,(\cF_t)_{t\ge 0},P)$ be a filtered probability space that
satisfies the ``usual conditions''. Furthermore, let $(H,\langle
\cdot,\cdot\rangle_H)$ be a separable real Hilbert space with corresponding norm denoted by~$\|\cdot\|_H$. The \emph{Lebesgue--Bochner space $L^2(\gO;H)$} is the space of all square integrable, $H$-valued random variables, i.e., the space of all $H$-valued random variables~$X$ such that
  \begin{equation*}
   \|X\|_{L^2(\gO;H)}^2
    := \E[ \|X\|_H^2]
    < + \infty.
  \end{equation*}
For all $X \in L^2(\gO;H)$, the \emph{second moment of~$X$}
  \begin{equation*}
   \IM^{(2)} X
    := \E[ X^{(2)}]
    = \E[X \otimes X]
  \end{equation*}
is well-defined as an element of $H^{(2)}$ since
  \begin{equation*}
   \|\IM^{(2)} X \|_{H^{(2)}}
    \le \E[ \|X^{(2)}\|_{H^{(2)}}]
    = \E[ \|X\|_H^2]
    = \|X\|_{L^2(\gO;H)}^2.
  \end{equation*}
Furthermore, we define the \emph{covariance~$\Cov(X)$ of $X \in L^2(\gO;H)$} by
  \begin{equation*}
   \Cov(X)
    := \IM^{(2)} (X - \E[X]),
  \end{equation*}
i.e., $\Cov(X)$ is the centered second moment of~$X$.

Let us denote by $\LNP(H)$ the space of all nonnegative, symmetric, nuclear operators on~$H$. Then there exists a unique operator $Q \in \LNP(H)$ such that
  \begin{equation*}\label{eq:rel-cov-Q}
   \langle \Cov(X), \varphi \otimes \psi \rangle_{H^{(2)}}
    = \langle Q \varphi, \psi \rangle_H,
  \end{equation*}
for all $\varphi, \psi \in H$. The operator~$Q$ is called the \emph{covariance operator of~$X$}.

For $Q \in \LNP(H)$, the $H$-valued stochastic process $W := (W(t), t \in \R_+)$ is called a \emph{$Q$-Wiener process} if it starts in zero $P$-almost surely, it has $P$-almost surely continuous trajectories, the increments are independent, and for $s<t$, the increment $W(t) - W(s)$ is Gaussian distributed with expectation zero and covariance operator $(t-s)Q$. Let us denote by $q$ the covariance in~$H^{(2)}$ that corresponds to~$Q$, i.e., $q$ is uniquely defined by the condition
 \begin{equation*}
   \langle q, \varphi \otimes \psi \rangle_{H^{(2)}}
    = \langle Q \varphi, \psi \rangle_H,
 \end{equation*}
for all $\varphi,\psi \in H$.
Since $Q\in \LNP(H)$, there exists an orthonormal basis
$(e_n,n \in \N)$ of $H$ consisting of eigenvectors of~$Q$. Therefore, we
have the representation $Q e_n = \gg_n e_n$, 
where $\gamma_n \ge 0$ is the eigenvalue corresponding to~$e_n$, for $n \in \N$. 
Then the square root of $Q$ is defined as
\begin{equation*}
 Q^{1/2}\psi 
  := \sum_{n \in \N} \gamma_n^{1/2} \langle \psi,e_n \rangle_H \,  e_n,
\end{equation*}
for $\psi \in H$,
and $Q^{-1/2}$ denotes the pseudo inverse of~$Q^{1/2}$. 
Let us denote by
$(\cH,\langle\cdot,\cdot\rangle_{\cH})$ the Hilbert space defined by 
$\cH := Q^{1/2}(H)$ 
endowed with the inner product
$\langle\psi,\phi\rangle_{\cH} := \langle Q^{-1/2}\psi,Q^{-1/2}\phi\rangle_H$, for $\psi,\phi \in \cH$.
Let $\LHS(\cH;H)$ refer to the space of all Hilbert--Schmidt
operators from $\cH$ to~$H$, and 
by $\| \cdot \|_{\LHS(\cH;H)}$ we denote the corresponding norm.
It holds that $q$ has an expansion with respect to the eigenvalues and eigenvectors of~$Q$ given by
  \begin{equation}\label{eq:ev_expansion_q}
   q = \sum_{n \in \N} \gg_n \, e_n \otimes e_n.
  \end{equation}
Furthermore, $W$ admits a Karhunen--Lo\`eve expansion, i.e.,
for all $t \in \IT$, it holds that
\begin{equation*}
W(t) = \sum_{n\in\N} \langle W(t),e_n\rangle_H \,e_n
      = \sum_{n\in\N} \gamma_n^{1/2} \, W_n(t) \, e_n,
\end{equation*}
where $(W_n, n \in \N)$ is a sequence of independent, real-valued Brownian motions.  

For $t \in \IT$, let us denote by
  \begin{equation*}
   \int_0^t \Phi(s) \, \dd W(s)
  \end{equation*}
the stochastic integral with respect to $\Phi \in \IL^2_{\cH,\IT}(H)$ with
\begin{equation*}
 \E \left[ \int_0^T \|\Phi(s)\|_{\LHS(\cH;H)}^2 \, \dd s\right]
   < + \infty .
\end{equation*}
Here, $\IL^2_{\cH,\IT}(H) := L^2(\gO \times \IT; \LHS(\cH;H))$ denotes
the space of integrands with respect to the measure space $(\Omega \times \IT, \cP_{\IT}, \, P
\otimes \lambda)$, where
$\cP_{\IT}$ is the $\sigma$-algebra of predictable sets in
$\Omega \times \IT$ and $\lambda$ denotes the Lebesgue measure.

Then the stochastic integral satisfies the It\^o isometry
  \begin{equation}\label{eq:Ito_isometry}
   \E \left[ \bigl\| \int_0^t \Phi(s) \, \dd W(s) \bigr\|_H^2 \right]
    = \E \left[ \int_0^t \|\Phi(s)\|_{\LHS(\cH;H)}^2 \, \dd s \right],
  \end{equation}
for all $t \in \IT$.
For an introduction to stochastic integrals with respect to Hilbert-space-valued stochastic processes, the reader is referred to~\cite{C07, DPZ92, PZ07, PR07}.

We define the \emph{weak stochastic integral} by
\begin{equation*}
 \int_0^t \langle \Psi(s), \Phi(s) \, \dd W(s) \rangle_H
    := \int_0^t \tilde{\Phi}_\Psi(s) \, \dd W(s),
\end{equation*}
for $t \in \IT$, with $\tilde{\Phi}_\Psi(s)\in\cH^*$, $s \in \IT$, such that for all $u \in \cH$
\begin{align*}
 \tilde{\Phi}_\Psi(s) u 
    := \langle \Psi(s), \Phi(s) u \rangle_H ,
\end{align*}
where $\Psi$ is an $H$-valued, continuous, adapted
stochastic process and $\Phi \in \IL^2_{\cH,\IT}(H)$.  The weak stochastic
integral is well-defined (cf.~\cite{PZ07}). 
%
Since the stochastic integral is a martingale (cf.\ Theorem~4.12 in~\cite{DPZ92}), it satisfies for $t \in \IT$ that
\begin{equation}\label{eq:stoch_int_cond_expect}
 \E\left[\left.
      \int_0^t \langle \Psi(s), \Phi(s) \, \dd W(s)\rangle_H
  \right| \cF_{0} \right]
    = \E\left[\left.
      \int_0^t \tilde{\Phi}_\Psi(s) \, \dd W(s)
  \right| \cF_{0} \right]
    = 0 .
\end{equation}
Furthermore, the weak stochastic integral satisfies a general It\^o isometry, which is shown in the following lemma.

\begin{lemma}\label{lem:Ito}
 Let $v_1, v_2 \in C^0(\IT; H)$ and $\Phi \in \IL^2_{\cH,\IT}(H)$. Then the weak stochastic integral satisfies
  \begin{align*}
   \E\Bigl[ \int_0^t \langle v_1(s), \Phi(s) \, \dd W(s)\rangle_H & \int_0^t \langle v_2(s'), \Phi(s') \, \dd W(s')\rangle_H \Bigr]\\
    &= \int_0^t 
	\langle v_1(s) \otimes v_2(s), \E[\Phi(s) \otimes \Phi(s)] q\rangle_{H^{(2)}} \, \dd s .
  \end{align*}
\end{lemma}

\begin{proof} 
 Let $t \in \IT$ be fixed.
 Using the Karhunen--Lo\`eve expansion of~$W$, we rewrite the weak stochastic integral
\begin{align*}
 \int_0^t \langle v_1(s), \Phi(s) \, \dd W(s)\rangle_H
=
  \sum_{n=1}^\infty {\gamma_n}^{1/2} \int_0^t
	    \langle v_1(s), \Phi(s) e_n \rangle_H \, \dd W_n(s) .
\end{align*}
Then the independence as well as the martingale property of the real-valued Wiener processes~$W_n$, $n \in \N$, implies that
\begin{align*}
&  \E\biggl[
    \int_0^t \langle v_1(s), \Phi(s) \, \dd W(s) \rangle_H 
    \int_0^t \langle v_2(s'), \Phi(s')\,\dd W(s')\rangle_H
  \biggr]\\
& \qquad =
   \sum_{n,m=1}^\infty {\gamma_n^{1/2} \gamma_m^{1/2}} \,
      \E\left[
	\int_0^t \langle v_1(s),\Phi(s) e_n\rangle_H \, \dd W_n(s)
	\int_0^t \langle v_2(s'), \Phi(s') e_m \rangle_H \, \dd W_m(s')
      \right]\\
&\qquad =
   \sum_{n=1}^\infty \gamma_n \,
      \E\left[
	\int_0^t \langle v_1(s), \Phi(s) e_n\rangle_H \, \dd W_n(s)
	\int_0^t \langle v_2(s'), \Phi(s') e_n\rangle_H \, \dd W_n(s')
      \right]\\
& \qquad =  
   \sum_{n=1}^\infty \gamma_n \,
      \E\left[
	\int_0^t \langle v_1(s), \Phi(s) e_n\rangle_H
	      \langle v_2(s), \Phi(s) e_n\rangle_H \, \dd s
      \right],
\end{align*}
 where we applied an It\^o isometry for real-valued Wiener processes (see Proposition~1.2 in~\cite{IW89}) in the last step.
 By the definition of the tensor product and of~$q$ in~\eqref{eq:ev_expansion_q}, we get that
 \begin{align*}
   \sum_{n=1}^\infty \gamma_n \,
      \E\Bigl[
	\int_0^t & \langle v_1(s), \Phi(s) e_n\rangle_H
	      \langle v_2(s), \Phi(s) e_n\rangle_H \, \dd s
      \Bigr]\\
    & = \int_0^t \E \left[
	\Bigl\langle v_1(s) \otimes v_2(s), \sum_{n=1}^\infty \gg_n (\Phi(s) e_n) \otimes (\Phi(s) e_n)\Bigr\rangle_{H^{(2)}} \, \dd s
      \right]\\
    & = \int_0^t 
	\langle v_1(s) \otimes v_2(s), \E[\Phi(s) \otimes \Phi(s)] q\rangle_{H^{(2)}} \, \dd s ,
 \end{align*}
 which finishes the proof.
\end{proof}
%

Having introduced some properties of Hilbert-space-valued random variables and the stochastic integral with respect to a $Q$-Wiener process, we start the discussion of the stochastic partial differential equation
\begin{equation}\label{eq:Wiener-SPDE}
\dd X(t) + AX(t) \, \dd t = \dd W(t)
\end{equation}
with $\cF_0$-measurable initial condition $X(0) = X_0 \in L^2(\gO;H)$, for $t \in \IT$. Here, $W$ is a $Q$-Wiener process and $A$ satisfies the assumptions that were made in Section~\ref{sec:PDEs}, i.e., $A$ is a linear operator that is densely defined, self-adjoint, positive definite, and not necessarily bounded but with compact inverse. Then \eqref{eq:Wiener-SPDE} admits a unique \emph{mild solution~$X$}, i.e., $X$ is a predictable process that satisfies $\sup_{t \in \IT} \|X(t)\|_{L^2(\gO;H)} < + \infty$ and
  \begin{equation*}
   X(t)
    = S(t) X_0 + \int_0^t S(t-s) \, \dd W(s),
  \end{equation*}
for all $t \in \IT$. Furthermore, this solution is equal to the unique \emph{weak solution}, i.e., $X$ is a predictable process that satisfies $\sup_{t \in \IT} \|X(t)\|_{L^2(\gO;H)} < + \infty$ and for all $\varphi \in \cD(A^*)$ and $t \in \IT$
  \begin{equation*}
   \langle \varphi, X(t) \rangle_H
    = \langle \varphi, X_0 \rangle_H
      - \int_0^t \langle A^* \varphi, X(s) \rangle_H \, \dd s
      + \langle \varphi, W(t) \rangle_H. 
  \end{equation*}
Recall that, although $A=A^*$ is self-adjoint, 
we write $A^*$ when the operator appears as an adjoint operator.
The solution~$X$ of~\eqref{eq:Wiener-SPDE} has certain
properties. In the following lemma, we establish another weak
formulation. There we use the space of test functions 
  \begin{equation*}
    C^1_{0,\{T\}}(\IT;\cD(A^*)):= \{\phi \in C^1(\IT;\cD(A^*)), \, \phi(T) = 0 \}.
  \end{equation*}
\begin{lemma}\label{lem:weak_solution}
 Let $X$ be the mild solution of the stochastic partial differential equation~\eqref{eq:Wiener-SPDE}. Then $X$ satisfies for all $\varphi \in C^1_{0,\{T\}}(\IT;\cD(A^*))$ $P$-almost surely that
  \begin{equation*}
   \langle X, (-\partial_t + A^*) \varphi \rangle_{L^2(\IT;H)}
      = \langle X_0, \varphi(0)\rangle_H
	+ \int_0^T \langle \varphi(t), \dd W(t) \rangle_H.
  \end{equation*}
\end{lemma}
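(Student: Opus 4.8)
The plan is to start from the weak solution formulation recorded immediately above the lemma: for every fixed $\psi \in \cD(A^*)$ and every $t \in \IT$,
\[
 \langle \psi, X(t) \rangle_H = \langle \psi, X_0 \rangle_H - \int_0^t \langle A^* \psi, X(s) \rangle_H \, \dd s + \langle \psi, W(t) \rangle_H ,
\]
and to promote the time-independent test vector $\psi$ to the time-dependent test function $\varphi(t)$ through a discretization combined with summation by parts. Fix $\varphi \in C^1_{0,\{T\}}(\IT;\cD(A^*))$ and a sequence of partitions $0 = t_0 < t_1 < \dots < t_N = T$ with mesh tending to zero. The object I would evaluate in two different ways is the telescoping sum $\sum_{j=0}^{N-1} \langle \varphi(t_j), X(t_{j+1}) - X(t_j) \rangle_H$.

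First, applying the weak solution with the admissible test vector $\psi = \varphi(t_j) \in \cD(A^*)$ on each subinterval rewrites the $j$-th summand as $-\int_{t_j}^{t_{j+1}} \langle A^*\varphi(t_j), X(s) \rangle_H \, \dd s + \langle \varphi(t_j), W(t_{j+1}) - W(t_j) \rangle_H$. Second, Abel's summation-by-parts identity expresses the same sum as $-\langle \varphi(0), X_0 \rangle_H - \sum_{j} \langle \varphi(t_j) - \varphi(t_{j-1}), X(t_j) \rangle_H$ plus a boundary term $\langle \varphi(t_{N-1}), X(T)\rangle_H$ that vanishes in the limit since $\varphi(t_{N-1}) \to \varphi(T) = 0$. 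Equating the two representations gives, for each fixed partition, an identity between random variables, and the conclusion follows by passing to the limit.

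I would carry out the limit in $L^2(\gO)$, which is enough: both sides of the asserted identity belong to $L^2(\gO)$ (the left-hand side because $\sup_{t}\|X(t)\|_{L^2(\gO;H)} < \infty$ and $(-\partial_t + A^*)\varphi \in C^0(\IT;H)$, the stochastic integral by the It\^o isometry), so mean-square equality yields the claimed $P$-almost sure equality. The deterministic Riemann sums converge: $\sum_j \int_{t_j}^{t_{j+1}} \langle A^*\varphi(t_j), X(s)\rangle_H\,\dd s \to \int_0^T \langle A^*\varphi(s), X(s)\rangle_H\,\dd s$ by uniform continuity of $A^*\varphi$ and integrability of $\|X(\cdot)\|_H$, and, writing $\varphi(t_j)-\varphi(t_{j-1}) = \int_{t_{j-1}}^{t_j}\varphi'(r)\,\dd r$, one gets $\sum_j \langle \varphi(t_j)-\varphi(t_{j-1}), X(t_j)\rangle_H \to \int_0^T \langle \varphi'(s), X(s)\rangle_H\,\dd s$ from $\varphi \in C^1$ together with the mean-square continuity of the mild solution $t \mapsto X(t)$ (itself a consequence of the strong continuity of $S$ for the term $S(t)X_0$ and of the It\^o isometry~\eqref{eq:Ito_isometry} for the stochastic convolution). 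Collecting the limits and rearranging produces exactly
\[
 \langle X, (-\partial_t + A^*)\varphi \rangle_{L^2(\IT;H)} = \langle X_0, \varphi(0) \rangle_H + \int_0^T \langle \varphi(t), \dd W(t) \rangle_H .
\]

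The main obstacle I expect is the stochastic term, that is, showing $\sum_j \langle \varphi(t_j), W(t_{j+1}) - W(t_j)\rangle_H \to \int_0^T \langle \varphi(t), \dd W(t)\rangle_H$ in $L^2(\gO)$. Here the relevant integrand is the inclusion $\iota \colon \cH \hookrightarrow H$ paired with $\Psi = \varphi$; note that $\iota \in \LHS(\cH;H)$ precisely because $Q$ is nuclear, so the weak stochastic integral is well-defined. The increment sum is the weak stochastic integral of the piecewise-constant interpolant $\varphi_N$ of $\varphi$, and Lemma~\ref{lem:Ito} applied with $v_1 = v_2 = \varphi_N - \varphi$ and $\Phi = \iota$ bounds the mean-square error by $\int_0^T \langle (\varphi_N(s)-\varphi(s))^{(2)}, q\rangle_{H^{(2)}}\,\dd s = \int_0^T \langle Q(\varphi_N(s)-\varphi(s)), \varphi_N(s)-\varphi(s)\rangle_H\,\dd s$, which tends to zero since $\varphi_N \to \varphi$ uniformly on $\IT$ and $Q$ is bounded. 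The remaining work is routine bookkeeping: checking the applicability of the weak solution on each subinterval and verifying the stated mean-square continuity of $X$.
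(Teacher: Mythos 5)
Your proof is correct, but it takes a genuinely different route from the paper. The paper's entire proof is a one-line citation: the identity is ``a direct consequence of (9.20) in~\cite{PZ07}'', i.e.\ of a known integration-by-parts formula for weak solutions with time-dependent test functions, combined with the observation that the boundary term at $t=T$ drops out because $\varphi(T)=0$. What you have done is essentially reprove that cited formula from first principles: starting from the weak formulation with time-independent test vectors, evaluating the telescoping sum $\sum_j \langle \varphi(t_j), X(t_{j+1})-X(t_j)\rangle_H$ in two ways (via the weak solution identity on each subinterval, and via Abel summation), and passing to the limit in $L^2(\gO)$, which is exactly where the condition $\varphi(T)=0$ kills the boundary term $\langle \varphi(t_{N-1}), X(T)\rangle_H$. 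The limit arguments you sketch are all sound: the a.s.\ exceptional sets cause no trouble since each partition uses only finitely many pairs $(\psi,t)$; uniform mean-square continuity of $X$ does follow from strong continuity of $S$ and the It\^o isometry; and the identification of the Riemann-type sum of Wiener increments as the weak stochastic integral of the piecewise-constant interpolant is the definition-level property of the integral. One small technical caveat: Lemma~\ref{lem:Ito} as stated requires $v_1,v_2\in C^0(\IT;H)$, whereas $\varphi_N-\varphi$ is only piecewise continuous, so strictly you should either invoke the isometry for elementary (deterministic, piecewise-constant) integrands directly or note that the proof of Lemma~\ref{lem:Ito} carries over verbatim; this is routine and not a gap. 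The trade-off: your argument is self-contained and makes visible exactly which structural facts (martingale increments, contraction semigroup, nuclearity of $Q$) are used, at the cost of length; the paper's proof buys brevity by outsourcing the entire analytic content to~\cite{PZ07}.
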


\begin{proof}
 The lemma is a direct consequence of (9.20) in~\cite{PZ07}, where we use that $\varphi(T) = 0$ by definition.
\end{proof}

Furthermore, we prove regularity of the second moment of the solution.

\begin{lemma}
 The second moment~$\IM^{(2)}X$ of the mild solution~$X$ of~\eqref{eq:Wiener-SPDE} is an element of~$\cX^{(2)}$.
\end{lemma}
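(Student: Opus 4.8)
The plan is to deduce the claim from the stronger assertion that the mild solution~$X$, viewed as a random element of $\cX = L^2(\IT;V)$, already lies in $L^2(\gO;\cX)$, that is,
\begin{equation*}
 \E\Bigl[\int_0^T \|X(t)\|_V^2 \,\dd t\Bigr] < +\infty.
\end{equation*}
Indeed, for any Hilbert space $\cX$ and $x \in \cX$ one has $\|x \otimes x\|_{\cX^{(2)}} = \|x\|_\cX^2$, so that $\gO \ni \go \mapsto X(\go)\otimes X(\go)$ is a (strongly measurable, by measurability of~$X$) Bochner integrable $\cX^{(2)}$-valued random variable with $\E[\|X\otimes X\|_{\cX^{(2)}}] = \E[\|X\|_\cX^2] = \|X\|_{L^2(\gO;\cX)}^2$. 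Its Bochner expectation $\E[X\otimes X]$ is then a well-defined element of $\cX^{(2)}$, and it coincides with $\IM^{(2)}X$, since evaluation at $(t,t')$ commutes with the Bochner integral and returns $\E[X(t)\otimes X(t')]$. Thus it remains only to bound $\|X\|_{L^2(\gO;\cX)}$.

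To this end I would insert the mild solution formula $X(t) = S(t)X_0 + \int_0^t S(t-s)\,\dd W(s)$ and estimate the two summands separately. The one spectral ingredient used throughout is that, for $\phi \in H$,
\begin{equation*}
 \int_0^\infty \|A^{1/2}S(t)\phi\|_H^2 \,\dd t = \sum_{k=1}^\infty \ga_k\,\langle \phi,e_k\rangle_H^2 \int_0^\infty e^{-2\ga_k t}\,\dd t = \tfrac12\|\phi\|_H^2,
\end{equation*}
which is immediate from the spectral representation of~$A$ and the positivity of the~$\ga_k$. Applied to the deterministic part with $\phi = X_0$ and followed by taking expectations, this gives $\E\int_0^T\|S(t)X_0\|_V^2\,\dd t \le \tfrac12\|X_0\|_{L^2(\gO;H)}^2 < +\infty$.

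For the stochastic convolution $W_A(t) := \int_0^t S(t-s)\,\dd W(s)$ I would first note that $A^{1/2}$, being closed, commutes with the stochastic integral, so that $A^{1/2}W_A(t) = \int_0^t A^{1/2}S(t-s)\,\dd W(s)$, provided the integrand is admissible; the latter holds because
\begin{equation*}
 \int_0^t \|A^{1/2}S(t-s)\|_{\LHS(\cH;H)}^2 \,\dd s = \sum_{n=1}^\infty \gg_n \int_0^t \|A^{1/2}S(r)e_n\|_H^2\,\dd r \le \tfrac12\sum_{n=1}^\infty\gg_n = \tfrac12\Tr(Q),
\end{equation*}
which is finite since $Q \in \LNP(H)$ is nuclear. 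Here I used that $(\gg_n^{1/2}e_n)$ is an orthonormal basis of $\cH = Q^{1/2}(H)$ to evaluate the Hilbert--Schmidt norm, and then the spectral bound above to control each $\int_0^t\|A^{1/2}S(r)e_n\|_H^2\,\dd r \le \tfrac12$. The It\^o isometry~\eqref{eq:Ito_isometry} then yields $\E\|W_A(t)\|_V^2 = \E\|A^{1/2}W_A(t)\|_H^2 \le \tfrac12\Tr(Q)$ uniformly in~$t$, whence $\E\int_0^T\|W_A(t)\|_V^2\,\dd t \le \tfrac{T}{2}\Tr(Q) < +\infty$. Combining the two estimates through $\|X\|_\cX^2 \le 2\|S(\cdot)X_0\|_\cX^2 + 2\|W_A\|_\cX^2$ closes the argument.

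The routine parts are the spectral integrals and the trace bound; the genuine point requiring care is the interchange of $A^{1/2}$ with the stochastic integral, i.e., establishing that the stochastic convolution actually takes values in $V = \cD(A^{1/2})$ with the stated second moment. If one prefers to avoid the closedness argument directly, the same bound follows by applying the estimate to the spectral Galerkin truncations $A^{1/2}P_N S(t-s)$, which are bounded, and passing to the limit $N \to \infty$ by monotone convergence together with the uniform-in-$N$ trace bound. A secondary point to spell out is the identification of the pointwise second moment $(t,t')\mapsto \E[X(t)\otimes X(t')]$ with the Bochner integral $\E[X\otimes X]$ in~$\cX^{(2)}$, which follows from the continuity of the evaluation functionals on $L^2(\IT\times\IT;H^{(2)})$.
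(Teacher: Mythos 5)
Your proposal is correct and follows essentially the same route as the paper: bound $\|\IM^{(2)}X\|_{\cX^{(2)}}$ by $\E[\|X\|_\cX^2]$, split the mild solution into $S(\cdot)X_0$ and the stochastic convolution, use the spectral estimate $\int_0^T\|A^{1/2}S(t)\phi\|_H^2\,\dd t \le \tfrac12\|\phi\|_H^2$ for the first part, and the It\^o isometry together with the Hilbert--Schmidt norm computation $\sum_n \gg_n \int_0^t\|A^{1/2}S(r)e_n\|_H^2\,\dd r \le \tfrac12\Tr(Q)$ for the second. The only (immaterial) differences are that the paper removes the cross term exactly via the independence of the stochastic integral from $\cF_0$ where you use $\|a+b\|^2\le 2\|a\|^2+2\|b\|^2$, and that you spell out two technical points the paper leaves implicit, namely commuting $A^{1/2}$ with the stochastic integral and identifying $\IM^{(2)}X$ with the Bochner integral $\E[X\otimes X]$ in $\cX^{(2)}$.
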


\begin{proof}
 We first observe that
  \begin{equation*}
   \|\IM^{(2)} X\|_{\cX^{(2)}}
    \le \E[ \|X^{(2)}\|_{\cX^{(2)}}]
    = \E[ \|X\|_{\cX}^2].
  \end{equation*}
 Using the definition of the norm and the independence of the stochastic integral of~$\cF_0$, we obtain that
  \begin{align*}
   \E[ \|X\|_{\cX}^2]
    & = \int_0^T \E\left[\Big\|A^{1/2} S(t)X_0 + A^{1/2} \int_0^t S(t-s) \, \dd W(s) \Big\|_H^2 \right] \, \dd t\\
    & = \E\left[ \int_0^T \|A^{1/2} S(t)X_0\|_H^2 \, \dd t \right]
      + \int_0^T \E\left[ \Big\|\int_0^t A^{1/2} S(t-s) \, \dd W(s) \Big\|_H^2 \right] \, \dd t.
  \end{align*}
For the first term we use that for $v \in H$
  \begin{equation}  \label{eq:1}
    \int_0^T \|A^{1/2} S(t)v\|_H^2 \, \dd t 
    \le \tfrac12\|v\|_H^2, 
  \end{equation}
which is easily proved by the spectral representation of $S(t)$.
Hence, the first term is bounded by $\tfrac12 \|X_0\|_{L^2(\gO;H)}^2$.
To the second term we apply the It\^o isometry~\eqref{eq:Ito_isometry}, the definition of
the Hilbert--Schmidt norm, and again \eqref{eq:1} to get
\begin{align*}
 \E\left[\Big\|\int_0^t A^{1/2} S(t-s) \, \dd W(s) \Big\|_H^2 \right]
  & = \int_0^t \|A^{1/2} S(t-s)\|_{\LHS(\cH;H)}^2 \, \dd s\\
  & = \sum_{n \in \N} \gg_n \int_0^t \|A^{1/2}S(t-s) e_n\|_H^2 \, \dd s
  \le \tfrac12 \Tr(Q).
\end{align*}
This leads to
\begin{equation*}
   \|\IM^{(2)} X\|_{\cX^{(2)}}
    \le \tfrac12 \bigl( \|X_0\|_{L^2(\gO;H)}^2 + T \Tr(Q) \bigr)
    < + \infty
  \end{equation*}
by our assumptions on $X_0$ and $Q$. 
\end{proof}

\section{Covariance partial differential equation} \label{sec:4}

In this section we calculate second moments and covariances of
two different classes of stochastic evolution 
partial differential equations. First, we look at random partial
differential equations, where the initial condition and the right hand
side of~\eqref{eq:PDE} are random variables. In a second
step we deal with mild solutions of the parabolic
stochastic partial differential equation~\eqref{eq:Wiener-SPDE}. 
We show that second
moments and covariances of the equations are equal to the unique
solution of the tensorized partial differential
equation~\eqref{eq:weakWienerEqn}, where the parameters $u_0$ and $f$
in~\eqref{eq:weakWienerEqn} have to be chosen according to
the random or stochastic partial differential equation.

\subsection{Random partial differential equation}
\label{ssec:RndPDE}
In the framework of Section~\ref{sec:PDEs}, 
let us consider in a first step the random partial differential equation 
\begin{equation}\label{eq:RPDE}
 (\partial_t + A) U = F
\end{equation}
on the time interval~$\IT$ with initial condition 
$U(0) = U_0 \in L^2(\gO;H)$ and $F \in L^2(\gO;\cY^*)$. 
For almost every fixed $\go \in \gO$,  it holds that $U_0(\go) \in H$ 
and $F(\go) \in \cY^*$ and it was shown in Section~\ref{sec:PDEs} that the 
weak variational problem
  \begin{equation}\label{eq:RPDE_variational}
   \cB(u,v) = \dualpair{\cY^*}{\cY}{F(\go)}{v} + \langle U_0(\go), v(0) \rangle_H
  \end{equation}
has a unique solution~$u \in \cX$ that depends on~$\go$ and that we denote by $U(\go) := u$. Furthermore, since $\partial_t + A$ is an isomorphism with bounded inverse, $U$ is an $\cX$-valued random variable that is in~$L^2(\gO;\cX)$ by the properties of $U_0$ and~$F$. Therefore, $U$ is a solution of~\eqref{eq:RPDE} in the sense that $U \in L^2(\gO;\cX)$ and $U(\go)$ satisfies the weak variational problem~\eqref{eq:RPDE_variational} for almost every $\go \in \gO$.

Since $U \in L^2(\gO;\cX)$, the second moment $\IM^{(2)} U$ of the variational solution~$U$ of the random partial differential equation~\eqref{eq:RPDE} is well-defined. Furthermore, $\IM^{(2)} U$ satisfies for $v_1, v_2 \in \cY$ that
  \begin{align*}
   \cB^{(2)}(\IM^{(2)} U , v_1 \otimes v_2)
    & = \E[ \cB(U, v_1) \cB(U,v_2) ]\\
    & = \E[ (\dualpair{\cY^*}{\cY}{F}{v_1} + \langle U_0, v_1(0) \rangle_H)
	  (\dualpair{\cY^*}{\cY}{F}{v_2} + \langle U_0, v_2(0) \rangle_H) ]\\
    & = \dualpair{\cY^{(2)*}}{\cY^{(2)}}{\IM^{(2)}F}{v_1 \otimes v_2}
	+ \langle \IM^{(2)} U_0, (v_1 \otimes v_2)(0,0) \rangle_{H^{(2)}}\\
	& \qquad + \E [ \dualpair{\cY^*}{\cY}{F}{v_1}\, \langle U_0, v_2(0) \rangle_H]
	+ \E [ \langle U_0, v_1(0) \rangle_H\, \dualpair{\cY^*}{\cY}{F}{v_2} ].
  \end{align*}
The two summands in the last line will be equal to zero if we assume that $F$ and $U_0$ are independent and that $\E(F) = 0$ or $\E(U_0) = 0$. This implies that $\IM^{(2)} U$ solves the weak variational problem~\eqref{eq:weakWienerEqn} with $u_0 := \IM^{(2)} U_0$ and $f := \IM^{(2)} F$, if $F$ and $U_0$ are independent and if $\E(F) = 0$ or $\E(U_0) = 0$.

Let us next look at the covariance $\Cov(U) = \IM^{(2)}(U - \E[U])$ of the solution~$U$ of~\eqref{eq:RPDE}. It satisfies for $v_1, v_2 \in \cY$ that
  \begin{align*}
      \cB^{(2)}(\Cov(U) , v_1 \otimes v_2)
    & = \E[ \cB(U - \E[U], v_1) \cB(U - \E[U],v_2) ]\\
    & = \dualpair{\cY^{(2)*}}{\cY^{(2)}}{\Cov(F)}{v_1 \otimes v_2}
	+ \langle \Cov(U_0), (v_1 \otimes v_2)(0,0) \rangle_{H^{(2)}}\\
	& \qquad + \E [ \dualpair{\cY^*}{\cY}{F - \E[F]}{v_1} \, \langle U_0 - \E[U_0], v_2(0) \rangle_H]\\
	& \qquad + \E [ \langle U_0  - \E[U_0], v_1(0) \rangle_H \, \dualpair{\cY^*}{\cY}{F - \E[F]}{v_2} ],
  \end{align*}
since
  \begin{equation*}
   \cB(\E[U], v)
      = \E[ \cB(U,v) ]
      = \E[\dualpair{\cY^*}{\cY}{F}{v} + \langle U_0, v(0) \rangle_H]
      = \dualpair{\cY^*}{\cY}{\E[F]}{v} + \langle \E[U_0], v(0) \rangle_H,
  \end{equation*}
for all $v \in \cY$. Therefore, $\Cov(U)$ solves the 
weak variational problem~\eqref{eq:weakWienerEqn} with 
$u_0 := \Cov(U_0)$ and $f := \Cov(F)$, 
provided that $F$ and $U_0$ are independent.

  We have thus seen that, under additional assumptions concerning
  the independence and the expectation of the data $U_0$ and~$F$, the second moment and
  the covariance of the random partial differential
  equation~\eqref{eq:RPDE} are the unique solutions of the tensorized
  partial differential equation~\eqref{eq:weakWienerEqn} with
  appropriately chosen data $u_0$
  and $f$. We will see in the next section that
  such assumptions are naturally met by the data of the
  stochastic partial differential equation~\eqref{eq:Wiener-SPDE}.

\subsection{Stochastic partial differential equation}
\label{ssec:SPDE}
In this subsection we consider the second moment~$\IM^{(2)} X$ of the
mild solution~$X$ of the stochastic partial differential
equation~\eqref{eq:Wiener-SPDE} with initial condition $X_0 \in
L^2(\gO;H)$ and show that it is equal to the unique solution of the
weak variational problem~\eqref{eq:weakWienerEqn}, where $u_0 =
\IM^{(2)}X_0$ and $f = \delta \tilde{\otimes} q$. 
Here, $\delta$ is a Dirac distribution~$\delta$ to be introduced next.

To describe the temporal correlation of the $Q$-Wiener 
process~$W$, 
we define the distribution~$\delta$ in the sense of 
L.~Schwartz as functional acting on
test functions $\phi\in C_0^\infty(\IT^2;\R)$ by 
\begin{align*}
  \langle \delta,\phi\rangle_{L^2(\IT;\R)^{(2)}} 
      = \int_0^T \phi(s,s)\,\dd s.
\end{align*}
Therefore, 
$\delta$ is a measure which is
supported on the diagonal of $\IT^2$. 
By Lemma~3 in~\cite{ST03},
$\delta\in H^{-s,-s}(\IT^2;\R) \cong H^{-s}(\IT;\R)^{(2)}$, for $s >
1/4$.  
In the following lemma we prove that an 
additional spatial regularity assumption on the covariance operator~$Q$
combined with the low temporal regularity of the Wiener process
implies that the tensor product $\delta \tilde{\otimes} q$ is in
$H^{-1}(\IT;V)^{(2)}$ and therefore in~$\cY^{(2)*}$.

\begin{lemma}\label{lem:prop_delta_q}
 Let $\Tr(AQ)<+\infty$. Then $\delta \tilde{\otimes} q \in \cY^{(2)*}$.
\end{lemma}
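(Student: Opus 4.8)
The plan is to exploit the explicit tensor decomposition of $\cY^{(2)*}$ recorded before the lemma and to show that $\delta \tilde{\otimes} q$ already lands in its second summand, namely $H^{-1}(\IT;\R)^{(2)} \tilde{\otimes} V^{(2)} \cong H^{-1}(\IT;V)^{(2)}$. Because the tensor product $\tilde{\otimes}$ separates the temporal and spatial factors and the associated cross-norm equals the product of the factor norms, it suffices to prove the two one-sided statements
\begin{equation*}
 \delta \in H^{-1}(\IT;\R)^{(2)}
  \quad\text{and}\quad
 q \in V^{(2)},
\end{equation*}
and then to read off $\delta \tilde{\otimes} q \in \cY^{(2)*}$ from the inclusion $H^{-1}(\IT;V)^{(2)} \subset \cY^{(2)*}$.

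For the temporal factor I would simply invoke the regularity of $\delta$ quoted above: by Lemma~3 in~\cite{ST03} one has $\delta \in H^{-s}(\IT;\R)^{(2)}$ for every $s > 1/4$, and applying this with $s = 1$ gives directly $\delta \in H^{-1}(\IT;\R)^{(2)}$. This is exactly where the low temporal regularity of the Wiener process is harmless: the singular diagonal measure $\delta$ is far smoother than the threshold $-1$ demanded by $H^{-1}(\IT;\R)$, so this factor poses no difficulty.

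The spatial factor is the heart of the matter, and it is here that the hypothesis $\Tr(AQ) < +\infty$ is consumed. Since $\|\phi\|_V = \|A^{1/2}\phi\|_H$, the tensorized norm satisfies $\|q\|_{V^{(2)}} = \|(A^{1/2}\otimes A^{1/2}) q\|_{H^{(2)}}$. Expanding $q = \sum_{n\in\N} \gg_n\, e_n \otimes e_n$ in the eigenbasis of~$Q$ and setting $f_n := \gg_n^{1/2} A^{1/2} e_n \in H$ (meaningful since finiteness of $\Tr(AQ) = \sum_{n} \gg_n \|e_n\|_V^2$ forces $e_n \in V$ whenever $\gg_n > 0$, and $f_n = 0$ otherwise), I would compute
\begin{equation*}
 \|q\|_{V^{(2)}}^2
  = \Bigl\| \sum_{n\in\N} f_n \otimes f_n \Bigr\|_{H^{(2)}}^2
  = \sum_{n,m\in\N} \langle f_n, f_m\rangle_H^2 .
\end{equation*}
The decisive step is to dominate this double sum by the single sum $\Tr(AQ) = \sum_{n} \gg_n \langle A e_n, e_n\rangle_H = \sum_{n} \|f_n\|_H^2$. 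Applying the Cauchy--Schwarz inequality termwise, $\langle f_n, f_m\rangle_H^2 \le \|f_n\|_H^2 \|f_m\|_H^2$, yields
\begin{equation*}
 \|q\|_{V^{(2)}}^2
  \le \Bigl( \sum_{n\in\N} \|f_n\|_H^2 \Bigr)^2
  = \bigl( \Tr(AQ) \bigr)^2
  < +\infty ,
\end{equation*}
so that $q \in V^{(2)}$.

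This last termwise estimate is the main obstacle, and it is precisely the positive semidefiniteness of~$Q$ that makes it work: the off-diagonal Gram entries $\langle f_n, f_m\rangle_H$ are controlled by the diagonal ones, which is what allows the double sum to be bounded by the \emph{square} of its diagonal $\Tr(AQ)$; without the positivity of~$Q$ no such bound would be available. Combining the two factors, $\delta \tilde{\otimes} q \in H^{-1}(\IT;V)^{(2)}$ with $\|\delta \tilde{\otimes} q\| \le \|\delta\|_{H^{-1}(\IT;\R)^{(2)}}\, \Tr(AQ)$, and since $H^{-1}(\IT;V)^{(2)} \subset \cY^{(2)*}$ this proves $\delta \tilde{\otimes} q \in \cY^{(2)*}$.
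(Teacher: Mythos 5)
Your proof is correct and follows essentially the same route as the paper's: both reduce the claim to $\delta \in H^{-1}(\IT;\R)^{(2)}$ (via Lemma~3 of \cite{ST03}) and $q \in V^{(2)}$ (consuming the hypothesis $\Tr(AQ)<+\infty$), and then place $\delta \tilde{\otimes} q$ in the summand $H^{-1}(\IT;\R)^{(2)} \tilde{\otimes} V^{(2)} \subset \cY^{(2)*}$. The only cosmetic difference is that you bound $\|q\|_{V^{(2)}}^2$ by expanding the Gram double sum and applying Cauchy--Schwarz termwise, obtaining $(\Tr(AQ))^2$, whereas the paper applies the triangle inequality directly to $\sum_{n} \gg_n\, e_n \otimes e_n$, giving the identical estimate $\|q\|_{V^{(2)}} \le \Tr(AQ)$.
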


\begin{proof}
 We first remark that $\Tr(AQ)<+\infty$ implies by~\eqref{eq:ev_expansion_q} that
  \begin{equation*}
   \|q\|_{V^{(2)}}
    = \Bigl\| \sum_{n=1}^\infty \gg_n e_n \otimes e_n \Bigr\|_{V^{(2)}}
    \le \sum_{n=1}^\infty \gg_n \|e_n \otimes e_n \|_{V^{(2)}}
    = \Tr(AQ)
    < + \infty,
  \end{equation*}
  and hence that $q \in V^{(2)}$.  It remains to show that $\delta \in
  H^{-1}(\IT;\R)^{(2)}$ to finish the proof. This is true since
  $\delta \in H^{-s}(\IT;\R)^{(2)}$, for all $s > 1/4$ by Lemma~3
  in~\cite{ST03}.
\end{proof}

Having shown some regularity of the expression $\delta \tilde{\otimes} q$, we are now able to state and prove the main result of the paper.

\begin{theorem}\label{thm:QWiener}
  Let $X$ be the mild solution of the stochastic partial differential
  equation~\eqref{eq:Wiener-SPDE} with $\cF_0$-measurable initial
  condition $X_0 \in L^2(\gO;H)$. Moreover, assume that
  $\Tr(AQ)<+\infty$.  Then the second moment~$\IM^2 X$ solves
  the weak variational problem~\eqref{eq:weakWienerEqn} with $u_0 :=
  \IM^{(2)} X_0$ and $f := \delta \tilde{\otimes} q$.
\end{theorem}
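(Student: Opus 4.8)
The plan is to establish~\eqref{eq:weakWienerEqn} by testing $\IM^{(2)}X$ against rank-one tensors $v_1 \otimes v_2$ of smooth test functions, reducing the whole computation to the scalar weak formulation of Lemma~\ref{lem:weak_solution} and the general It\^o isometry of Lemma~\ref{lem:Ito}, and then passing to general $v \in \cY^{(2)}$ by density. First I would fix $v_1, v_2 \in C^1_{0,\{T\}}(\IT;\cD(A^*))$. Since $\IM^{(2)}X = \E[X \otimes X] \in \cX^{(2)}$ and $\cB^{(2)}(\cdot, v_1 \otimes v_2)$ is a bounded linear functional that factors on rank-one arguments, interchanging it with the Bochner expectation (legitimate because $\E[\|X\|_\cX^2] < +\infty$) gives
\begin{equation*}
 \cB^{(2)}(\IM^{(2)}X, v_1 \otimes v_2) = \E[\cB(X,v_1)\,\cB(X,v_2)].
\end{equation*}
For such smooth $v_i$ the dual pairing $\dualpair{V}{V^*}{X(t)}{(-\partial_t + A^*)v_i(t)}$ collapses to the $H$-inner product, so $\cB(X,v_i) = \langle X, (-\partial_t + A^*)v_i\rangle_{L^2(\IT;H)}$, and Lemma~\ref{lem:weak_solution} identifies this $P$-almost surely with $\langle X_0, v_i(0)\rangle_H + \int_0^T \langle v_i(t), \dd W(t)\rangle_H$.

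Next I would insert these two expressions and expand the product into four terms before taking the expectation. The product of the two initial-data terms yields $\langle \IM^{(2)}X_0, (v_1 \otimes v_2)(0,0)\rangle_{H^{(2)}}$, which is precisely the initial contribution in~\eqref{eq:weakWienerEqn} with $u_0 = \IM^{(2)}X_0$. The two cross terms vanish: since $X_0$ is $\cF_0$-measurable, I would condition on $\cF_0$ and use that the weak stochastic integral has zero conditional mean by~\eqref{eq:stoch_int_cond_expect}. For the product of the two weak stochastic integrals I would apply Lemma~\ref{lem:Ito} with $\Phi = I$; here $I \in \IL^2_{\cH,\IT}(H)$ because $\|I\|_{\LHS(\cH;H)}^2 = \Tr(Q) < +\infty$, and $\E[I \otimes I]q = q$, so the lemma produces $\int_0^T \langle v_1(s) \otimes v_2(s), q\rangle_{H^{(2)}}\,\dd s$.

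It then remains to recognize this integral as the action of the forcing. By the definition of $\delta$ as the diagonal measure on $\IT^2$ together with the time--space separation of $\tilde{\otimes}$ and the inclusion $q \in V^{(2)} \subset H^{(2)}$, one has
\begin{equation*}
 \dualpair{\cY^{(2)*}}{\cY^{(2)}}{\delta \tilde{\otimes} q}{v_1 \otimes v_2}
  = \int_0^T \langle v_1(s) \otimes v_2(s), q\rangle_{H^{(2)}}\,\dd s,
\end{equation*}
so~\eqref{eq:weakWienerEqn} holds for every smooth rank-one tensor with $f = \delta \tilde{\otimes} q$. Finally I would extend to arbitrary $v \in \cY^{(2)}$ by linearity and continuity: the left-hand side is continuous on $\cY^{(2)}$ since $\IM^{(2)}X \in \cX^{(2)}$ and $B^{(2)}$ is an isomorphism, the right-hand side is continuous because $\delta \tilde{\otimes} q \in \cY^{(2)*}$ by Lemma~\ref{lem:prop_delta_q}, and the smooth rank-one tensors span a dense subspace of $\cY^{(2)}$.

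The step I expect to be the main obstacle is this last density/continuity passage: one must check that $C^1_{0,\{T\}}(\IT;\cD(A^*)) \otimes C^1_{0,\{T\}}(\IT;\cD(A^*))$ is dense in $\cY^{(2)}$ and that the identity, which initially holds only $P$-almost surely and only on smooth tensors, upgrades to an equality of continuous functionals on all of $\cY^{(2)}$; this is exactly where the hypothesis $\Tr(AQ) < +\infty$ is needed, via Lemma~\ref{lem:prop_delta_q}, to guarantee that $f$ is a bounded functional. The algebraic heart of the argument --- the four-term split with the cross terms annihilated by the martingale property and the diagonal term delivered by Lemma~\ref{lem:Ito} --- is otherwise a direct computation.
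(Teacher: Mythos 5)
Your proposal is correct and follows essentially the same route as the paper's own proof: reduce to smooth rank-one test functions $v_1\otimes v_2$ with $v_1,v_2\in C^1_{0,\{T\}}(\IT;\cD(A^*))$, identify $\cB(X,v_i)$ via Lemma~\ref{lem:weak_solution}, expand into four terms with the cross terms annihilated by~\eqref{eq:stoch_int_cond_expect} and $\cF_0$-measurability of $X_0$, treat the double stochastic integral with Lemma~\ref{lem:Ito} (with $\Phi = I$), and conclude with Lemma~\ref{lem:prop_delta_q} and density of the smooth tensors in $\cY^{(2)}$. The only differences are cosmetic: you spell out the verification $\|I\|_{\LHS(\cH;H)}^2 = \Tr(Q)$ and the final continuity/density passage, both of which the paper leaves implicit.
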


\begin{proof}
 First, we remark that the embedding $C^1_{0,\{T\}}(\IT;\cD(A^*)) \subset \cY$ is continuous and dense. Therefore, it is sufficient to show \eqref{eq:weakWienerEqn} for $v_1 \otimes v_2$ with $v_1, v_2 \in C^1_{0,\{T\}}(\IT;\cD(A^*))$.
 So, let $v_1,v_2 \in C^1_{0,\{T\}}(\IT;\cD(A^*))$. Then using the definition of the bilinear form~$\cB^{(2)}$, we obtain that
  \begin{align*}
   \cB^{(2)}(\IM^2 X,v_1 \otimes v_2)
    & = \dualpair{\cX^{(2)}}{\cX^{(2)*}}{\IM^2 X}{(-\partial_t + A^*)^{(2)}(v_1 \otimes v_2)}\\
    & = \E[ \dualpair{\cX}{\cX^*}{X}{(-\partial_t + A^*) v_1}\, \dualpair{\cX}{\cX^*}{X}{(-\partial_t + A^*) v_2}].
  \end{align*}
 Furthermore, the regularity of $v_1$ and $v_2$ implies that
  \begin{align*}
   \E[ \dualpair{\cX}{\cX^*}{X}{(-\partial_t + A^*) v_1}\, &\dualpair{\cX}{\cX^*}{X}{(-\partial_t + A^*) v_2}]\\
    & = \E[ \langle X, (-\partial_t + A^*) v_1 \rangle_{L^2(\IT;H)} \langle X, (-\partial_t + A^*) v_2 \rangle_{L^2(\IT;H)} ].
  \end{align*}
 The application of Lemma~\ref{lem:weak_solution} leads to
  \begin{align*}
   \E[ &\langle X, (-\partial_t + A^*) v_1 \rangle_{L^2(\IT;H)} \langle X, (-\partial_t + A^*) v_2 \rangle_{L^2(\IT;H)}]\\
     & = \E[ \langle X_0, v_1(0) \rangle_H \langle X_0, v_2(0) \rangle_H]
	+ \E\left[ \langle X_0, v_1(0) \rangle_H \int_0^T \langle v_2(t), \dd W(t)\rangle_H \right]\\
	& \quad + \E\left[ \int_0^T \langle v_1(t), \dd W(t) \rangle_H \langle X_0, v_2(0) \rangle_H\right]
	+ \E\left[ \int_0^T \langle v_1(t), \dd W(t)\rangle_H \int_0^T \langle v_2(t), \dd W(t)\rangle_H \right].
  \end{align*}
 The second and third terms on the right hand side are equal to zero by~\eqref{eq:stoch_int_cond_expect} and since $X_0$ is $\cF_0$-measurable. We apply Lemma~\ref{lem:Ito} to the fourth term and the definition of the tensor product to the first and the fourth terms to derive
  \begin{align*}
   \E[ \langle X, (-\partial_t + A^*) v_1 \rangle_{L^2(\IT;H)} &\langle X, (-\partial_t + A^*) v_2 \rangle_{L^2(\IT;H)} ]\\
    & = \langle \IM^2 X_0, (v_1 \otimes v_2)(0,0) \rangle_{H^{(2)}}
      + \langle \delta \tilde{\otimes} q, v_1 \otimes v_2 \rangle_{L^2(\IT;H)^{(2)}}.
  \end{align*}
 The desired assertion now follows, since
  \begin{equation*}
   \langle \delta \tilde{\otimes} q, v_1 \otimes v_2 \rangle_{L^2(\IT;H)^{(2)}}
    = \dualpair{\cY^{(2)*}}{\cY^{(2)}}{\delta \tilde{\otimes} q}{v_1 \otimes v_2},
  \end{equation*}
 due to Lemma~\ref{lem:prop_delta_q} and the properties of $v_1$ and~$v_2$.
\end{proof}

\begin{corollary}
 Under the assumptions of Theorem~\ref{thm:QWiener},
 the covariance $\Cov(X)$ of the mild solution~$X$ of~\eqref{eq:Wiener-SPDE} satisfies
  \begin{equation*}
   \cB^{(2)}(\Cov(X),v) = \dualpair{\cY^{(2)*}}{\cY^{(2)}}{\delta \tilde{\otimes} q}{v} + \langle \Cov(X_0), v(0,0)\rangle_{H^{(2)}},
  \end{equation*}
for all $v \in \cY^{(2)}$, and therefore it solves the weak variational problem~\eqref{eq:weakWienerEqn} with $u_0 := \Cov(X_0)$ and $f:= \delta \tilde{\otimes} q$.
\end{corollary}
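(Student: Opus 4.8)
The plan is to deduce the corollary directly from Theorem~\ref{thm:QWiener} by observing that the centered process is again a mild solution of the same equation, only with a centered initial datum. Writing $\Cov(X) = \IM^{(2)}(X - \E[X])$, I would set $\tilde{X} := X - \E[X]$ and show that $\tilde{X}$ is the mild solution of~\eqref{eq:Wiener-SPDE} driven by the same $Q$-Wiener process~$W$ but with initial condition $\tilde{X}_0 := X_0 - \E[X_0]$. The second moment of this centered solution is then exactly $\Cov(X)$, so Theorem~\ref{thm:QWiener} applies almost verbatim.

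First I would identify the expectation of the mild solution. Since the stochastic integral is a mean-zero martingale (cf.~\eqref{eq:stoch_int_cond_expect}), taking expectations in the mild solution formula gives
\begin{equation*}
 \E[X(t)] = S(t)\,\E[X_0], \qquad t \in \IT,
\end{equation*}
so that
\begin{equation*}
 \tilde{X}(t) = X(t) - \E[X(t)] = S(t)\,\tilde{X}_0 + \int_0^t S(t-s)\,\dd W(s).
\end{equation*}
This is precisely the mild solution formula with initial datum $\tilde{X}_0$. I would then check that $\tilde{X}_0$ satisfies the hypotheses of Theorem~\ref{thm:QWiener}: it is $\cF_0$-measurable, because $X_0$ is and $\E[X_0] \in H$ is deterministic, and it lies in $L^2(\gO;H)$ since $X_0$ does. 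The spatial regularity assumption $\Tr(AQ) < +\infty$ is unaffected by the centering.

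With this in place I would apply Theorem~\ref{thm:QWiener} to $\tilde{X}$: its second moment solves~\eqref{eq:weakWienerEqn} with $u_0 := \IM^{(2)}\tilde{X}_0$ and $f := \delta \tilde{\otimes} q$. The forcing term is unchanged because $\delta \tilde{\otimes} q$ originates solely from the It\^o isometry of the stochastic integral (via Lemma~\ref{lem:Ito}), which does not involve the initial condition. It then remains only to rename the data: by the definition of the covariance, $\IM^{(2)}\tilde{X} = \IM^{(2)}(X - \E[X]) = \Cov(X)$ and $\IM^{(2)}\tilde{X}_0 = \IM^{(2)}(X_0 - \E[X_0]) = \Cov(X_0)$, which yields the stated variational identity for all $v \in \cY^{(2)}$.

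The only genuinely delicate point, and hence the main obstacle, is the justification of the centering step: one must confirm that $\tilde{X}$ really is a mild (equivalently weak) solution of~\eqref{eq:Wiener-SPDE} inheriting the predictability and the uniform $L^2(\gO;H)$ bound required by the theorem. An alternative to invoking the theorem as a black box would be to repeat its proof with $X$ replaced by $\tilde{X}$ throughout; this mirrors the covariance computation already carried out for the random partial differential equation in Subsection~\ref{ssec:RndPDE}, where centering the data likewise removed the mixed cross terms.
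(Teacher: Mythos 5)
Your proposal is correct and follows essentially the same route as the paper's own proof: center the solution, observe that $X - \E[X]$ is the mild solution of~\eqref{eq:Wiener-SPDE} with initial datum $X_0 - \E[X_0]$ (using that the stochastic integral has zero mean, so $\E[X(t)] = S(t)\E[X_0]$), and then apply Theorem~\ref{thm:QWiener} to the centered equation. Your write-up is in fact slightly more explicit than the paper's, which leaves the verification of $\cF_0$-measurability and square-integrability of the centered initial datum implicit.
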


\begin{proof}
 For all $t \in \IT$, it holds that
  \begin{align*}
   X(t) - \E[X(t)]
    & = S(t) X_0 + \int_0^t S(t-s) \, \dd W(s) - S(t) \E[X_0]\\
    & = S(t) (X_0 - \E[X_0]) + \int_0^t S(t-s) \, \dd W(s),
  \end{align*}
i.e., if $X$ is a mild solution of~\eqref{eq:Wiener-SPDE} with initial condition~$X_0$, then $X - \E[X]$ is mild solution of~\eqref{eq:Wiener-SPDE} with initial condition~$X_0 - \E[X_0]$. The assertion follows by an application of Theorem~\ref{thm:QWiener} to the transformed equation.
\end{proof}


\end{document}